\numberwithin{equation}{section}
\tikzset{cross/.style={cross out, draw=black, minimum size=2*(#1-\pgflinewidth), inner sep=0pt, outer sep=0pt},
cross/.default={4.5pt}}
\DeclareMathOperator{\lk}{\ell k}
\DeclareMathOperator{\Spin}{Spin}
\DeclareMathOperator{\rk}{rk}
\renewcommand{\geq}{\geqslant}
\renewcommand{\leq}{\leqslant} 
\renewcommand{\epsilon}{\varepsilon}
\newcommand{\Z}{\mathbb{Z}}
\newcommand{\F}{\mathbb{F}}
\DeclareFontFamily{U}{mathx}{\hyphenchar\font45}
\DeclareFontShape{U}{mathx}{m}{n}{
      <5> <6> <7> <8> <9> <10>
      <10.95> <12> <14.4> <17.28> <20.74> <24.88>
      mathx10
      }{}
\DeclareSymbolFont{mathx}{U}{mathx}{m}{n}
\DeclareMathAccent{\widecheck}{0}{mathx}{"71}
\DeclareMathAccent{\wideparen}{0}{mathx}{"75}
\newtheorem{teo}{Theorem}[section]
\newtheorem*{teo*}{Theorem}
\newtheorem{lemma}[teo]{Lemma}
\newtheorem{prop}[teo]{Proposition}
\newtheorem*{prop*}{Proposition}
\xpatchcmd{\@thm}{\thm@headpunct{.}}{\thm@headpunct{}}{}{}
\theoremstyle{definition}
\newtheorem{remark}[teo]{Remark}
\pgfplotsset{compat=1.13}
\begin{document}
\title{Nearly fibered links with genus one}
\author{Alberto Cavallo and Irena Matkovi\v{c}\\ \\
 \footnotesize{Universit\'e du Qu\'ebec \`a Montr\'eal (UQAM),}\\ 
 \footnotesize{Montr\'eal (QC) H3C 3P8, Canada}\\ \\
 \footnotesize{Uppsala Universitet,}\\
 \footnotesize{Uppsala 751 06, Sweden}\\ \\  \small{alberto.cavallo@mcgill.ca}\hspace{2cm}\small{irena.matkovic@math.uu.se}}
\date{}

\maketitle
\begin{abstract}
 We classify all the $n$-component links in the $3$-sphere that bound a Thurston norm minimizing Seifert surface $\Sigma$ with Euler characteristic $\chi(\Sigma)=n-2$ and that are nearly fibered, which means that the rank of their link Floer homology group $\widehat{HFL}$ in the maximal (collapsed) Alexander grading $s_{\text{top}}$ is equal to two. In other words, such a link $L$ satisfies $s_{\text{top}}=\frac{n-\chi(\Sigma)}{2}=1$, and in addition $\rk\widehat{HFL}_{*}(L)[1]=2$ and $\rk\widehat{HFL}_{*}(L)[s]=0$ for every $s>1$. 
 
 The proof of the main theorem is inspired by the one of a similar recent result for knots by Baldwin and Sivek, and involves techniques from sutured Floer homology. Furthermore, we also compute the group $\widehat{HFL}$ for each of these links. \\
 \newline
 \small{2020 {\em Mathematics Subject Classification}: 57K10, 57K18.}
\end{abstract}

\section{Introduction}
The \emph{Seifert genus} $g_3$ is one of the most important invariants of knots. Its definition is standard in literature: say $\Sigma$ is a \emph{Seifert surface} for a knot $K$ if it is a compact, oriented surface, embedded in $S^3$, such that $\partial\Sigma=K$; we call $g_3(K)$ the minimal genus of a connected Seifert surface for $K$.
When we want to study a link $L$ with $n>1$ components this is no longer true, but there are two main possibilities:
\begin{itemize}
    \item either we can consider $g_3(L)$, using precisely the same definition as above;
    \item or we take \[G_3(L)=\dfrac{n-\chi_3(F)}{2}\:,\] which we call the \emph{Seifert big genus} of $L$, and we define $\chi_3$ as the maximal Euler characteristic of any Seifert surface for $L$ (dropping the requirement of $F$ being connected).
\end{itemize}
We recall that in $S^3$ we can define \emph{fibered links} as the links whose complements fiber over the circle; then the following is a classical result in low dimensional topology.
\begin{teo}[Stallings \cite{Stallings}]
 \label{teo:classic}
 The only fibered links in $S^3$, with Seifert big genus equal to one, are the trefoil knots $T_{2,\pm3}$, the figure-eight knot $4_1$ and the Hopf links $T_{2,\pm2}$. 
\end{teo}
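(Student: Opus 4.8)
The plan is to play the fibered hypothesis against the Euler characteristic constraint to pin down the fiber, and then to run an open book / mapping class group analysis to determine which monodromies close up to $S^3$.

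First I would use that a fibered link has a connected fiber $\Sigma$ which is Thurston norm minimizing, so $\chi_3(L)=\chi(\Sigma)$. The hypothesis $G_3(L)=\tfrac{n-\chi_3(L)}{2}=1$ then gives $\chi(\Sigma)=n-2$. Writing $\Sigma=\Sigma_{g,n}$ (connected, one boundary circle per component of $L$), the identity $\chi(\Sigma_{g,n})=2-2g-n$ forces $g=2-n$; since $g\ge 0$ and $n\ge 1$, only $(g,n)=(1,1)$ and $(g,n)=(0,2)$ survive. Thus $\Sigma$ is either a once-punctured torus (so $L$ is a knot) or an annulus (so $L$ is a two-component link). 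In either case $L$ is recovered up to isotopy and mirroring from the conjugacy class of the monodromy $\phi\in\mathrm{MCG}(\Sigma,\partial\Sigma)$ through the open book construction, subject to the constraint that the resulting closed manifold $Y_\phi$ be $S^3$.

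The annulus case is quick: $\mathrm{MCG}(\Sigma_{0,2},\partial)\cong\Z$ is generated by the core Dehn twist $\tau$, so $\phi=\tau^k$, and the annular open book $(\Sigma_{0,2},\tau^k)$ produces the lens space $L(k,1)$ (with $S^1\times S^2$ when $k=0$). Hence $Y_\phi=S^3$ exactly for $k=\pm1$, the positive and negative Hopf bands, whose bindings are the Hopf links $T_{2,\pm2}$. The once-punctured torus case is the heart of the matter. Here $\mathrm{MCG}(\Sigma_{1,1},\partial)$ acts on $H_1(\Sigma_{1,1})\cong\Z^2$ through $\mathrm{SL}(2,\Z)$; let $M$ be the image of $\phi$. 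Presenting $\pi_1(Y_\phi)$ from the mapping torus together with the binding filling, its abelianization is $\mathrm{coker}(M-I)$, so a necessary condition for $Y_\phi=S^3$ is $\det(M-I)=2-\mathrm{tr}(M)=\pm1$, i.e. $\mathrm{tr}(M)\in\{1,3\}$. I would then enumerate the finitely many conjugacy classes meeting this bound: trace $1$ yields the two mutually inverse order-six elliptic classes, realized by $\tau_a\tau_b$ and $\tau_a^{-1}\tau_b^{-1}$, while trace $3$ yields a single Anosov class, realized by $\tau_a\tau_b^{-1}$.

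It remains to confirm that each surviving monodromy really gives $S^3$ and to name the binding, and this is the step I expect to be the main obstacle: passing from the homological condition $\mathrm{tr}(M)\in\{1,3\}$ to the genuine condition $Y_\phi\cong S^3$ requires checking $\pi_1(Y_\phi)=1$ and excluding impostor homology spheres. For the elliptic (trace $1$) classes the monodromy is periodic and $Y_\phi$ is a small Seifert fibered space that one must verify is $S^3$ rather than, say, a Brieskorn sphere; for the hyperbolic (trace $3$) class one invokes the narrow class number one of $\Q(\sqrt5)$ to see that the conjugacy class is unique. Once this is in place, each page is a plumbing of two Hopf bands and the bindings are identified as the trefoils $T_{2,\pm3}$ and the figure-eight $4_1$; as a cross-check the fibered Alexander polynomial $\det(tI-M)\doteq t-\mathrm{tr}(M)+t^{-1}$ equals $t-1+t^{-1}$ for the trefoils and $t-3+t^{-1}$ for the figure-eight, matching the known invariants and completing the classification.
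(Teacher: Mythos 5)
The paper does not actually prove Theorem \ref{teo:classic} --- it quotes it as a classical result of Stallings --- so your argument has to stand on its own, and it does not yet do so. Your reduction of the fiber to $\Sigma_{1,1}$ or an annulus is correct, and the annulus case is complete: $\mathrm{MCG}$ of the annulus rel boundary really is generated by the core twist, and the resulting lens spaces are $S^3$ only for $k=\pm 1$. The genuine gap is in the punctured-torus case, at exactly the step you flag as ``the main obstacle,'' and the tools you propose there do not close it. The point is that the map $\mathrm{MCG}(\Sigma_{1,1},\partial\Sigma_{1,1})\to \mathrm{SL}(2,\Z)$ is \emph{not} injective: its kernel is the infinite cyclic group generated by the Dehn twist $\delta$ about a boundary-parallel curve (indeed $\mathrm{MCG}(\Sigma_{1,1},\partial)\cong B_3$, which is torsion-free, so no nontrivial monodromy is honestly periodic rel boundary). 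Consequently the monodromy $\phi$ is not determined by its homological matrix $M$, and ``enumerate the finitely many conjugacy classes with $\mathrm{tr}(M)\in\{1,3\}$'' is not an enumeration of the possible monodromies: each admissible $M$ has infinitely many pairwise non-conjugate lifts $\phi_0\delta^k$, $k\in\Z$, and all of them produce integer homology spheres, since $H_1(Y_\phi)=\mathrm{coker}(M-I)$ sees only $M$. Concretely, $Y_{\phi_0\delta^k}$ is obtained from $Y_{\phi_0}$ by $(-1/k)$-surgery on the binding with its page (= Seifert, i.e.\ $0$) framing; for the trace-$1$ classes these are the Brieskorn spheres $\Sigma(2,3,|6k\pm 1|)$, including the Poincar\'e sphere, and for the trace-$3$ class they are the nontrivial $\mp 1/k$-surgeries on the figure-eight knot. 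These are precisely your ``impostor homology spheres,'' and they cannot be excluded by anything that factors through $\mathrm{SL}(2,\Z)$: the narrow-class-number-one argument you invoke only proves uniqueness of the $\mathrm{SL}(2,\Z)$-conjugacy class of trace $3$, which was never in doubt, and says nothing about which lift occurs; likewise the Nielsen--Thurston type cannot distinguish the lifts, since all lifts of a given $M$ share it.

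To complete the proof you must pin down the boundary-twisting integer and show $k=0$, i.e.\ show $\pi_1\bigl(Y_{\phi_0\delta^k}\bigr)\neq 1$ for $k\neq 0$. Given the surgery description above, this follows from the Gordon--Luecke theorem (no nontrivial surgery on a nontrivial knot yields $S^3$), or, staying classical, from identifying the trace-$1$ lifts as Seifert fibered spaces over $S^2$ with three exceptional fibers of orders $2,3,|6k\pm1|$ and the trace-$3$ lifts as fillings of the figure-eight complement, all with nontrivial fundamental group. Alternatively, one can avoid the homological detour altogether and work in $\mathrm{MCG}(\Sigma_{1,1},\partial)$ directly: writing $\phi_*$ as an automorphism of $F_2=\langle x,y\rangle$ fixing the boundary word $[x,y]$, the condition $Y_\phi\cong S^3$ becomes the triviality of the group $\langle x,y\mid \phi_*(x)x^{-1},\,\phi_*(y)y^{-1}\rangle$, and one shows this forces $\phi$ to be conjugate to $(\tau_a\tau_b)^{\pm1}$ or $\tau_a\tau_b^{-1}$ with no extra boundary twisting. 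As written, your argument proves only that the fiber is an annulus or a once-punctured torus with homological monodromy of trace $1$ or $3$; the identification of the actual monodromy --- which is the real content of the theorem --- is still missing.
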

We recall that the link Floer chain complex $\left(\widehat{CFL}_*(\mathcal D),\widehat\partial\right)$ over the field with two elements $\F$, introduced by Ozsv\'ath and Szab\'o in \cite{Multivariable}, can be constructed from a Heegaard diagram $\mathcal D=(\Sigma,\alpha,\beta,\textbf w,\textbf z)$, where the sets of basepoints both contain $n$ elements. Under some conditions, the diagram $\mathcal D$ represents an $n$-component link $L$.
If we ignore the information given by $\textbf z$ then $\mathcal D$ is just a (multi-pointed) Heegaard diagram for $S^3$ and then $\left(\widehat{CFL}_*(\mathcal D),\widehat\partial\right)$ can be interpreted as a multi-filtered complex (the filtration is induced by the link $L$) whose total homology is $\left(\F_{-1}\oplus\F_0\right)^{\otimes\:n-1}$. The absolute grading $*$ is called \emph{Maslov grading}, while the multi-filtration the \emph{Alexander filtration}. 

We write $\widehat{\textbf{HFL}}_{*}(L)[s_1,...,s_n]$ for the homology of the graded object associated to $\left(\widehat{CFL}_*(\mathcal D),\widehat\partial\right)$.
This is a multi-graded homology group, where now the Alexander filtration is turned into an absolute (multi-) grading $[s_1,...,s_n]$. Such a group is an isotopy invariant of $L$ in $S^3$. We can define a slightly different homology group by collapsing the Alexander multi-grading as follows 
\begin{equation}
\widehat{HFL}_*(L)[s]=\bigoplus_{s=s_1+...+s_n}\widehat{\textbf{HFL}}_{*}(L)[s_1,...,s_n]\:.
\label{eq:collapsed}
\end{equation}
It is proved in \cite{Ni2} that 
\begin{equation}
\label{eq:genus}
 s_{\text{top}}:=\max_{s\in\Z}\left\{\rk\widehat{HFL}_*(L)[s]\neq0\right\}=G_3(L)\:,
\end{equation}
while
a very important result of Ghiggini \cite{Ghiggini} and Ni \cite{Ni} is that a link is fibered if and only if its link Floer homology satisfies the following property:
\begin{equation}
 \label{eq:rank}
 \rk\widehat{HFL}_*(L)[s_\text{top}]=1\:.
\end{equation}
Combining Equations \eqref{eq:genus} and \eqref{eq:rank}, we obtain that link Floer homology detects each of the five links mentioned in Theorem \ref{teo:classic}; in other words, those are the only links such that $G_3(L)=1$ and $\widehat{HFL}_*(L)[1]\cong\F$.

In this paper we want to take a step forward and classify all the links in $S^3$ such that $G_3(L)=1$, but this time with $\widehat{HFL}_*(L)[1]\cong\F^2$; we call such links \textbf{nearly fibered} following the terminology in \cite{BS}. The first part of this classification is the case of knots which was done recently by Baldwin and Sivek.
\begin{teo}[Baldwin-Sivek \cite{BS}]
 \label{teo:BS}
 A nearly fibered knot $K$ such that $g_3(K)=1$ is isotopic to either the knot $5_2$, the knot $15_{43522}$, the $2$-framed Whitehead doubles of the trefoil knot $\emph{Wh}^\pm_2(T_{2,3})$ or a pretzel knot $P(-3,3,2m+1)$ for $m\geq0$ (including the Stevedore knot $6_1$) up to mirroring.
\end{teo}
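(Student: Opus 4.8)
The plan is to translate the near-fibered condition into a statement about the \emph{sutured} Floer homology of the complement of a minimal genus Seifert surface, and then to classify the resulting sutured manifolds. Concretely, let $K$ be a nearly fibered knot with $g_3(K)=1$ and fix a genus-one Seifert surface $\Sigma$; since $G_3(K)=g_3(K)=1$ for a knot, the hypothesis reads $\rk\widehat{HFL}_*(K)[1]=2$. Decomposing the complement $S^3\setminus N(K)$ along $\Sigma$ produces a balanced sutured manifold $(M,\gamma)$ whose boundary is a genus-two surface, with $R_\pm(\gamma)$ two copies of $\Sigma$ and a single annular suture $\gamma$ coming from $K$. By Juh\'asz's theorem identifying the sutured Floer homology of such a complement with the top Alexander grading of knot Floer homology, one has $SFH(M,\gamma)\cong\widehat{HFL}_*(K)[1]$ up to grading shifts, so the whole problem becomes the classification of taut sutured manifolds of this shape with $\rk SFH=2$. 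The case $\rk SFH=1$ is exactly the product (fibered) case by the Ghiggini--Ni fibering detection, recovering the figure-eight and the trefoils; the rank-two case is what must be dissected.

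The second step is a structural analysis of $(M,\gamma)$ using sutured manifold hierarchies and Juh\'asz's surface decomposition formula. I would look inside $M$ for product annuli and product discs: decomposing along such a surface leaves $SFH$ unchanged up to isomorphism while genuinely simplifying the manifold, whereas a decomposition along a more interesting surface strictly splits $SFH$ into proper summands. Because the total rank is only two, the manifold can be only ``one step away'' from a product: after removing all product pieces I expect to be left with a reduced sutured manifold whose non-product part is governed by a very small amount of gluing data, essentially a mapping-class or curve datum on the once-punctured torus $\Sigma$. A parallel ingredient, which I would develop alongside this, is control on the \emph{number} of minimal genus Seifert surfaces: near-fiberedness should force $\Sigma$ to be essentially unique (or to come in a short, understood list), so that $(M,\gamma)$ is well defined and the analysis does not branch uncontrollably.

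The third step is realization and recognition. The finite amount of residual gluing data should organize the possibilities into a small number of families, and I would match each to an explicit knot, obtaining the sporadic knots $5_2$ and $15_{43522}$, the two $2$-twisted Whitehead doubles $\mathrm{Wh}^\pm_2(T_{2,3})$ of the trefoil, and the one-parameter pretzel family $P(-3,3,2m+1)$ (with $m=0$ the Stevedore $6_1$), all up to mirroring. In the reverse direction I would verify directly that each listed knot is genus-one and satisfies $\rk\widehat{HFL}_*(K)[1]=2$, for instance from a computation of its knot Floer homology or of its Alexander polynomial together with a thinness/structure argument, thereby confirming that the list is not redundant.

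The main obstacle is the \emph{completeness} of the structural classification: showing that no sutured manifold with $\rk SFH=2$ and the prescribed boundary escapes the enumeration, and in particular separating the genuine infinite pretzel family from the sporadic examples. This requires pushing the hierarchy argument far enough to pin down the residual non-product piece exactly, and then recognizing the resulting manifold as a specific knot complement---a step where different gluings can a priori yield homeomorphic complements, so one must argue carefully (using the uniqueness of $\Sigma$ and, if needed, auxiliary invariants) that the knot type is determined. Handling the possible non-uniqueness of the minimal Seifert surface, and the delicate identification of the exceptional knot $15_{43522}$, are where I expect the real work to lie.
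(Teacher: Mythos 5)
This statement is Theorem \ref{teo:BS} of the paper, which is \emph{quoted} from Baldwin--Sivek \cite{BS}, not proved here; the paper only invokes it and adapts its method (specifically \cite[Section 5]{BS}) to the link case in Section 2. So your proposal can only be measured against the strategy the paper summarizes when it cites \cite{BS}: pass to the sutured complement of a minimal genus Seifert surface, use Juh\'asz's results on sutured manifolds with rank-two sutured Floer homology together with facts about $L$-space knots to identify that complement with an explicit model, and then reconstruct the knot by regluing the product region. Your skeleton --- the identification $SFH(S^3(\Sigma))\cong\widehat{HFL}_*(K)[1]$, a structural classification of the rank-two sutured manifold, then realization --- does match that strategy in outline.

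As a proof, however, there are genuine gaps. First, your step 2 is exactly where all of the work of \cite[Section 5]{BS} lies, and you replace it with an expectation (``I expect to be left with a reduced sutured manifold whose non-product part is governed by a very small amount of gluing data''). Nothing in your outline forces the non-product piece to be, say, a trefoil or unknot complement with prescribed sutures rather than something wilder; that conclusion needs Juh\'asz's polytope and rank theorems (reducedness, horizontal primeness, dimension bounds on the $\Spin^c$ polytope) combined with the $L$-space knot input, which is the technical heart of the argument. Second, your worry about uniqueness of the minimal genus Seifert surface is a red herring: one fixes a single $\Sigma$ and classifies its complement; no uniqueness statement is needed for the argument to be well posed. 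Third, the realization step is mischaracterized. For knots, Gordon--Luecke already guarantees that the complement determines the knot, so ``different gluings yielding homeomorphic complements'' is not the issue; the real problem is determining \emph{which} regluings of $\Sigma\times[-1,1]$ to the model sutured manifold produce $S^3$ at all. The gluings carry an integer twisting parameter, one must compute which values return $S^3$, and the surviving values are precisely what generate the infinite pretzel family $P(-3,3,2m+1)$ alongside the sporadic knots $5_2$, $15_{43522}$ and $\mathrm{Wh}^\pm_2(T_{2,3})$. (Compare how this paper, in its link case, needs Proposition \ref{prop:complement}, the surgery homology computations of Lemma \ref{lemma:1} and the following lemma, and Waldhausen's peripheral-system theorem to close the analogous gap.) Without that step the list in the statement cannot be derived.
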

Our main result extends the classification in Theorem \ref{teo:BS} to links.
\begin{figure}[ht]
 \centering
 \includegraphics[width=6cm]{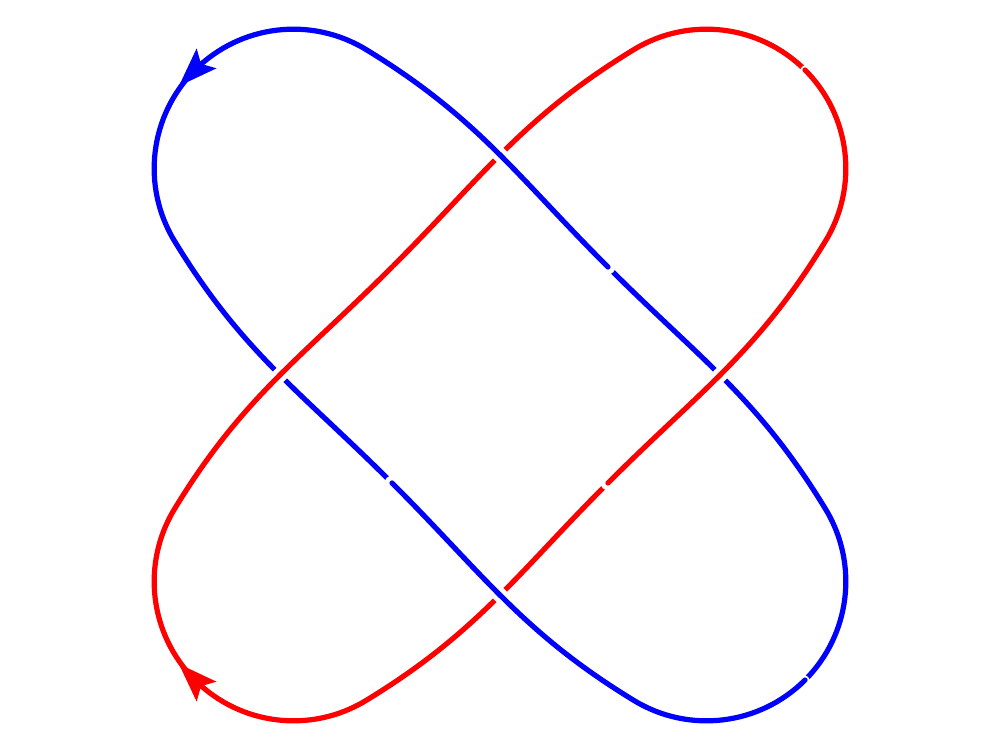}
 \hspace{1cm}
 \includegraphics[width=7cm]{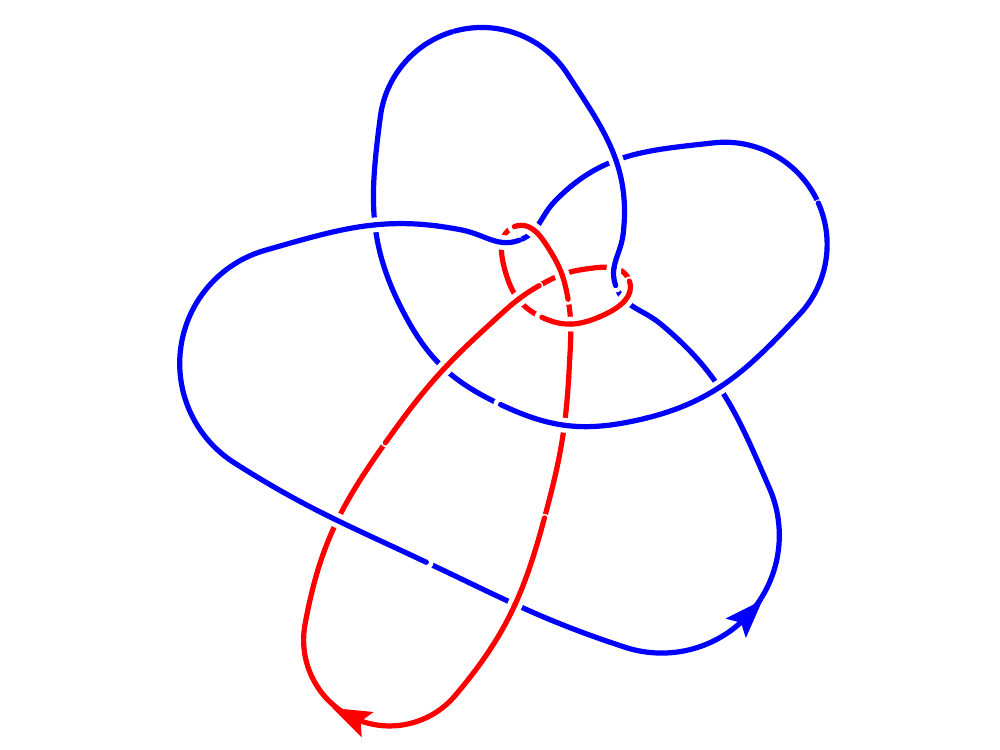}
 \caption{A diagram for the links $T^{'}_{2,4}$ (left) and $T^{'}_{2,3;2,4}$ (right). Both links bound an embedded annulus in $S^3$.}
 \label{B-W}
\end{figure}
\begin{teo}
\label{teo:main}
 A nearly fibered link $L$, with $n>1$ components, such that $G_3(L)=1$, is isotopic to either $K\sqcup\bigcirc$, where $K$ is a trefoil knot, a figure-eight knot or a Hopf link, the link $T_{2,4}$ or the $(2,4)$-cable of $T_{2,3}$, with oppositely oriented components, up to mirroring. 
\end{teo}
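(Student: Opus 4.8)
The plan is to run the sutured-Floer strategy underlying Theorems \ref{teo:classic} and \ref{teo:BS}: first pin down the topology of a Thurston norm minimizing surface $\Sigma$, then cut along it and read off the rank-two constraint in sutured Floer homology, reducing everything to a genus-one knot piece (handled by Baldwin--Sivek) and an annular piece (the genuinely new case). I would begin with the Euler characteristic bookkeeping. Writing $\Sigma$ as a disjoint union of its $k$ connected components with genera $g_i$ and boundary counts $b_i$, one has $\sum b_i=n$ and $\chi(\Sigma)=\sum_i(2-2g_i-b_i)=n-2$, which rearranges to $k-\sum_i g_i=n-1$. Since every component meets $L$ we have $1\le k\le n$, so $\sum_i g_i\in\{0,1\}$ and the list of possibilities is extremely rigid: either (A) one once-punctured torus together with $n-1$ disks, or (B) one annulus together with $n-2$ disks. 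Each disk component is a split unknot, so in both cases $L$ is the split union of a piece $L_0$ with $d$ unknots, where $L_0$ is a genus-one knot in case (A) and a two-component link bounding an annulus in case (B).

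Next I would cut $S^3$ along $\Sigma$ to obtain a taut sutured manifold $M$ (tautness coming from norm-minimality) and apply the surface-decomposition theorem behind \eqref{eq:genus}--\eqref{eq:rank} to identify $SFH(M)\cong\widehat{HFL}_*(L)[s_{\text{top}}]$, of rank two. The $d$ split unknots each contribute a tensor factor $V\cong\F^2$ concentrated in Alexander grading zero, so $\rk\widehat{HFL}_*(L)[s_{\text{top}}]=2^{d}\cdot\rk SFH(M_0)$, where $M_0$ is the cut-open piece carried by $L_0$. Rank two forces $2^{d}\le 2$, hence $d\le 1$, and splits into two regimes: $d=1$ with $\rk SFH(M_0)=1$ (so $L_0$ is fibered), or $d=0$ with $\rk SFH(M_0)=2$ (so $L_0$ is itself nearly fibered). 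In the fibered regime the genus-one fibered links are exactly the trefoils, the figure-eight, and the Hopf links by Theorem \ref{teo:classic}, which already produces the families $K\sqcup\bigcirc$ appearing in the statement.

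In case (A) we always have $d=n-1\ge 1$, so $L_0$ is a knot that must be fibered; the nearly-fibered genus-one knots of Theorem \ref{teo:BS} cannot contribute, since they would need $d=0$ (forcing $n=1$, not a link) and, if adjoined to the unavoidable split unknot, would double the rank to four. This leaves the heart of the argument: case (B) with $d=0$, a nearly-fibered two-component link $L_0$ bounding an annulus $A$ with core $J$. Here $N(A)$ is a solid-torus neighborhood of $J$, so $M_0=S^3(A)$ is the exterior of $J$ equipped with two parallel sutures whose slope is the framing of $A$, and the ``oppositely oriented components'' hypothesis is exactly what makes $A$ an oriented Seifert surface. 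I would compute $SFH$ of this sutured knot exterior in terms of $\widehat{HFK}(J)$ and show that rank two forces $J$ to be fibered; tautness and norm-minimality of $A$ then bound the genus of $J$, leaving $J$ the unknot or the trefoil, and matching the cabling slope reproduces precisely $T_{2,4}$ and the $(2,4)$-cable of $T_{2,3}$.

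I expect this last step to be the main obstacle: controlling the sutured exterior $S^3(A)$ and proving that $\rk SFH=2$ simultaneously pins down the core $J$ and the cabling slope. This is the annular analogue of the delicate monodromy analysis of Baldwin--Sivek, and it is exactly where the two special cables emerge; I anticipate needing the structure theory of reduced rank-two sutured manifolds, decompositions along product annuli, and careful control of how the framing interacts with norm-minimality in order to exclude higher-genus or satellite cores. Finally, I would verify directly that each surviving link is genuinely nearly fibered and compute its full $\widehat{HFL}$, thereby both completing the classification and delivering the promised homology calculation.
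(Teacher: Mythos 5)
Your reduction to the annulus case is essentially the paper's argument in a different order: your Euler-characteristic bookkeeping is Lemma \ref{lemma:2}, your split-unknot/rank-doubling step is the paper's treatment of split links via Equation \eqref{eq:disjoint} combined with Theorem \ref{teo:classic} and Ghiggini--Ni, and both routes land on a two-component link bounding a taut annulus whose sutured complement has rank-two $SFH$. Where you genuinely diverge is the endgame. The paper converts the Baldwin--Sivek analysis into an \emph{abstract} identification of $(S^3\setminus\mathring{\nu(F)},\gamma_F)$ with $(S^3\setminus\mathring{\nu(C)},\gamma_2)$, reglues the thickened annulus to obtain a diffeomorphism of link complements preserving longitudes (Proposition \ref{prop:complement}), and then---because link complements do \emph{not} determine links, as the paper's Remark stresses---must also prove the meridians are preserved (Lemma \ref{lemma:1} and the slam-dunk computation that follows it) in order to invoke Waldhausen's theorem on peripheral systems. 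You instead stay intrinsic: the annulus complement \emph{is} the exterior of its core $J$ with two sutures of slope the framing $f$, so once the rank-two analysis pins down $(J,f)$ as (unknot, $\pm2$) or (trefoil, $\pm2$), the link $L=\partial A$ is the corresponding $(2,\pm4)$-cable with no regluing argument at all. This is exactly the ``cabling construction'' alternative the paper's Remark acknowledges but declines to pursue, and it buys a real simplification (no Waldhausen, no surgery-homology lemmas). Two cautions, though. First, be explicit about what hides inside ``matching the cabling slope'': the sutured machinery of \cite{BS} and \cite{Juhasz} produces an abstract diffeomorphism of sutured manifolds, and converting that into the intrinsic statement about $(J,f)$ uses knot-complement rigidity---the solid torus theorem when $C$ is the unknot, Gordon--Luecke (or the classical result for torus knots) when $C$ is the trefoil---together with the fact that self-diffeomorphisms of these exteriors change boundary slopes only by sign; this is precisely the rigidity that fails for links and forces the paper into its peripheral-system detour, so it is the load-bearing point of your shortcut. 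Second, $SFH$ of $(S^3\setminus\mathring{\nu(J)},\gamma_f)$ is not a function of $\widehat{HFK}(J)$ alone (it depends on finer filtered data), so your stated plan to ``compute it in terms of $\widehat{HFK}(J)$'' should be replaced by the structural route you also name---Juh\'asz's rank-two structure theory and product-annulus decompositions---which is the same heavy lifting that both you and the paper ultimately delegate to Baldwin--Sivek's Section 5.
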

The last two links are pictured in Figure \ref{B-W}. From now on we use the notation that $T^{'}_{2,4}$ represents the torus link $T_{2,4}$ and $T^{'}_{2,3;2,4}$ represents the $(2,4)$-cable of the positive trefoil knot, both with oppositely oriented components.

We can compute the link Floer homology group $\widehat{HFL}(L)$ for every link in Theorem \ref{teo:main}; and then also of their mirror images using the formula 
\begin{equation}
 \widehat{HFL}_d(L^*)[s]\cong\widehat{HFL}_{-d+1-n}(L)[-s]
 \label{eq:mirror}
\end{equation}
for every $d,s\in\Z$, which is proved in \cite{Book}.
\begin{prop}
 \label{prop:cable}
 We have that \[\widehat{HFL}_d(T^{'}_{2,3;2,4})[s]\cong\F^2_{(-1)}[1]\oplus\F^4_{(-2)}[0]\oplus\F_{(-1)}[0]\oplus\F_{(0)}[0]\oplus\F^2_{(-3)}[-1]\:.\]
\end{prop}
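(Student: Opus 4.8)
The plan is to combine the nearly fibered hypothesis with the conjugation symmetry of link Floer homology and with the Alexander spectral sequence, so as to reduce the whole computation to a single question: which Maslov grading supports the top Alexander group $\widehat{HFL}_*(L)[1]$.

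Write $L=T'_{2,3;2,4}$. Since $L$ is nearly fibered with $G_3(L)=1$, Equation~\eqref{eq:genus} gives $s_{\text{top}}=1$, while the definition of nearly fibered gives $\rk\widehat{HFL}_*(L)[1]=2$ and $\widehat{HFL}_*(L)[s]=0$ for $|s|>1$. The conjugation symmetry $\widehat{HFL}_d(L)[s]\cong\widehat{HFL}_{d-2s}(L)[-s]$ recovers the grading $s=-1$ from the grading $s=1$; in particular, once we know the top group is $\F^2_{(-1)}[1]$, the bottom group is necessarily $\F^2_{(-3)}[-1]$, as stated.

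The heart of the argument is that, granting $\widehat{HFL}_*(L)[1]=\F^2_{(-1)}$, the middle grading is then completely forced. I would run the spectral sequence whose $E_1$-page is $\widehat{HFL}(L)$ and which converges to the total homology $\F_{-1}\oplus\F_0$ of the complex $(\widehat{CFL}_*(\mathcal D),\widehat\partial)$ discussed before \eqref{eq:collapsed}; its differentials drop the Maslov grading by one and strictly drop the Alexander grading. First, the two surviving generators sit in Maslov gradings $-1$ and $0$: because the symmetry changes the Maslov grading by the even amount $-2s$ while these gradings differ by the odd number $1$, it cannot interchange the survivors, hence fixes each of them, and a fixed point of $(d,s)\mapsto(d-2s,-s)$ has $s=0$; this produces the summands $\F_{(-1)}[0]$ and $\F_{(0)}[0]$. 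Second, since the gradings $s=\pm1$ are concentrated in Maslov degrees $-1$ and $-3$, which differ by $2$, all higher differentials vanish for Maslov-degree reasons, so everything is governed by the single three-term complex $\F^2_{(-1)}[1]\to\F^N_{(-2)}[0]\to\F^2_{(-3)}[-1]$, where $N$ is the rank of $\widehat{HFL}(L)[0]$ in Maslov grading $-2$. As no generator of $s=\pm1$ survives, the left map is injective and the right map surjective, so the middle homology has rank $N-4$; since the total homology carries nothing in Maslov grading $-2$, we get $N=4$. Any middle class in a Maslov grading other than $-2,-1,0$ would be isolated from the differential and would therefore survive, which is impossible. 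This pins down $\widehat{HFL}_*(L)[0]=\F^4_{(-2)}\oplus\F_{(-1)}\oplus\F_{(0)}$ and hence the full statement.

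The remaining and genuinely computational input is the claim that both generators of $\widehat{HFL}_*(L)[1]$ lie in Maslov grading $-1$; this is the main obstacle. I would establish it by an explicit computation, either from a Heegaard diagram adapted to the genus-one Seifert surface bounded by $L$ (the cabling annulus), enumerating the generators in the maximal Alexander grading together with their Maslov degrees, or from the identification of $\widehat{HFL}(L)[s_{\text{top}}]$ with the sutured Floer homology of the complementary sutured manifold analysed in the proof of Theorem~\ref{teo:main}, after fixing its absolute Maslov grading. As a global check, the graded Euler characteristic, obtained from the one-variable Alexander polynomial via $\chi(\widehat{HFL}(L))\doteq(t^{1/2}-t^{-1/2})^{\,n-1}\Delta_L(t)$, equals $-2t+4-2t^{-1}$, which already forces the top group into an odd Maslov grading and the middle grading to have Euler characteristic $+4$, consistently with the answer. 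The homology of the mirror image is then read off from \eqref{eq:mirror}.
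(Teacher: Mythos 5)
Your proposal has two genuine gaps, and the second one undermines the claim that the middle Alexander grading is ``forced.''

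First, the essential computational input --- that both generators of $\widehat{HFL}_*(L)[1]$ lie in Maslov grading $-1$ --- is never established: you defer it to an unspecified Heegaard-diagram enumeration, or to the sutured identification ``after fixing its absolute Maslov grading,'' which is precisely the hard part, since the isomorphism $SFH(S^3(F))\cong\widehat{HFL}(L)[1]$ is an isomorphism of relatively graded groups and pinning down the absolute Maslov grading requires real additional work. The paper avoids this issue entirely: it computes the full multi-graded group $\widehat{\textbf{HFL}}(T_{2,3;2,4})$ of the \emph{coherently} oriented cable via the Gorsky--Hom $h$-function formula for $L$-space cable links, then converts to the oppositely oriented cable $T'_{2,3;2,4}$ by the orientation-reversal formula \eqref{eq:orientation} and collapses the Alexander grading; the top group comes out of that computation rather than feeding into it.

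Second, even granting $\widehat{HFL}_*(L)[1]\cong\F^2_{(-1)}$ (hence $\F^2_{(-3)}$ in grading $-1$ by symmetry), your spectral-sequence argument does not pin down the middle grading. The failing step is the claim that the two survivors must sit in Alexander grading $0$ because they are ``fixed'' by the conjugation symmetry. The symmetry $\widehat{HFL}_d(L)[s]\cong\widehat{HFL}_{d-2s}(L)[-s]$ is an abstract isomorphism between graded pieces of the homology; it does not act on the spectral sequence or on its surviving classes, so survivors are not fixed points of it in any meaningful sense. The Hopf link already refutes the principle you invoke: its Maslov-$0$ survivor lies in Alexander grading $1$, not $0$. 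Once this step is removed, a second pattern passes every remaining check in your argument: take $\widehat{HFL}_*(L)[0]=\F_{(0)}\oplus\F^3_{(-2)}$, let one generator of $\F^2_{(-1)}[1]$ survive as the Maslov $-1$ class, let the other inject into $\F^3_{(-2)}[0]$ under $d_1$, and let the remaining two Maslov $-2$ classes map onto $\F^2_{(-3)}[-1]$. Then $d_1^2=0$ holds, all higher differentials vanish for exactly the Maslov-degree reasons you cite, the $E_\infty$ page has rank $2$ in Maslov gradings $0$ and $-1$, and the middle Euler characteristic is $1+3=4$, as required. So your constraints admit at least two answers, $\F_{(0)}\oplus\F_{(-1)}\oplus\F^4_{(-2)}$ and $\F_{(0)}\oplus\F^3_{(-2)}$, and distinguishing them needs strictly more input --- which is why the paper performs the full multi-graded computation instead of an indirect forcing argument.
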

As far as we know this is the first time that this homology group has been computed completely.
Furthermore, we obtain that $\widehat{HFL}(L)$ is different for every link in Theorems \ref{teo:BS} and \ref{teo:main} and this implies that link Floer homology detects each nearly fibered link of Seifert big genus one, confirming some results of Binns and Dey in \cite{BD} and Binns and Martin in \cite{BM}.

The paper is organized as follows: in Section 2 we prove Theorem \ref{teo:main}, separating the case of split and non-split links. Finally, in Section 3 we present our table of genus one nearly fibered links and we explain the proof of Proposition \ref{prop:cable}.

\paragraph*{Acknowledgements} AC has a CIRGET post-doctoral fellowship at the Universit\'e du Qu\'ebec \`a Montr\'eal. IM is supported by the Knut and Alice Wallenberg Foundation through the grant KAW 2021.0191, and by the Swedish Research Counsil through the grant number 2020-04426, as a postdoctoral researcher at Uppsala Universitet. We are both grateful to the Alfr\'ed R\'enyi Institute of Mathematics in Budapest for its hospitality during the semester on "Singularities and low-dimensional topology", and for the support from the \'Elvonal (Frontier) grant KKP126683 (given by NKFIH). We thank Steven Sivek for a helpful conversation, and the referee for their corrections. 

\section{Proof of the main result}
\subsection{Split links}
Suppose that $L$ is as in Theorem \ref{teo:main} and is a split link; in this case, we can write $L=L_1\sqcup L_2$ for some links $L_1$ and $L_2$. We recall that the link Floer homology of a disjoint union, see \cite{Multivariable,Cavallo}, satisfies the following relation 
\begin{equation}
 \begin{aligned}
  \widehat{HFL}_{d}(L_1\sqcup L_2)[s]\cong\bigoplus_{\substack{d_1+d_2=d\\s_1+s_2=s}}\widehat{HFL}_{d_1}(L_1)[s_1]&\otimes\widehat{HFL}_{d_2}(L_2)[s_2]\:\oplus \\ &\oplus\:\bigoplus_{\substack{d_1+d_2=d+1\\s_1+s_2=s}}\widehat{HFL}_{d_1}(L_1)[s_1]\otimes\widehat{HFL}_{d_2}(L_2)[s_2]\:.
 \end{aligned} 
 \label{eq:disjoint}
\end{equation}
This implies that \[2=\rk\widehat{HFL}_{*}(L)[1]=2\cdot\rk\widehat{HFL}_{*}(L_1)[G_3(L_1)]\cdot\rk\widehat{HFL}_{*}(L_2)[G_3(L_2)]\:;\] since the top-Alexander grading rank cannot be zero by definition, we have that \[\rk\widehat{HFL}_{*}(L_i)[G_3(L_i)]=1\] and then $L_i$ is fibered for $i=1,2$, see \cite{Ghiggini,Ni}.
Moreover, it has to be the case that $G_3(L_1)=1$ and $G_3(L_2)=0$ (up to relabelling); and a big genus one fibered link is either a trefoil or a figure-eight knot or a Hopf link from Theorem \ref{teo:classic}, while it is a classical result that the only big genus zero fibered link is the unknot. This proves the first part of Theorem \ref{teo:main}.

\subsection{Non-split links}
The first step in this case is the following lemma.
\begin{lemma}
 \label{lemma:2}
 A non-split link $L$ such that $G_3(L)=1$ has at most two components.
\end{lemma}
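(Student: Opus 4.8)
The plan is to translate the hypothesis $G_3(L)=1$ into a statement about the Euler characteristic of a maximal Seifert surface, and then run a simple counting argument over its connected components, using non-splitness to control the sign of each piece's Euler characteristic.

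First I would unwind the definitions. Since $G_3(L)=\frac{n-\chi_3(L)}{2}=1$, there is a (Thurston norm minimizing) Seifert surface $\Sigma$ for $L$ realizing the maximal Euler characteristic, so that $\chi(\Sigma)=\chi_3(L)=n-2$. I take $\Sigma$ to have no closed components, which is harmless here: the quantity $\chi_3$ is only finite because closed components are excluded from the definition of a Seifert surface (otherwise one could add disjoint spheres to increase $\chi$ without bound). I then write $\Sigma=\Sigma_1\sqcup\cdots\sqcup\Sigma_k$ as a disjoint union of connected pieces, each with nonempty boundary, so that if $\Sigma_i$ has genus $g_i$ and $n_i\geq 1$ boundary circles then $\chi(\Sigma_i)=2-2g_i-n_i$ and $\sum_i n_i=n$.

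The key step is to show that, for a non-split link, no $\Sigma_i$ is a disk. If some $\Sigma_i$ were a disk bounding a single component $K$ of $L$, then the boundary of a small regular neighborhood of $\Sigma_i$ would be a $2$-sphere separating $K$ from $L\setminus K$, since the remaining components of $L$ lie on the other pieces $\Sigma_j$, which are disjoint from $\Sigma_i$ by embeddedness; this contradicts non-splitness (here we assume $n\geq 2$, the case $n=1$ being trivial). Consequently each $\Sigma_i$ is neither a sphere nor a disk, and hence $\chi(\Sigma_i)\leq 0$, with equality precisely when $\Sigma_i$ is an annulus. Summing over the pieces gives $n-2=\chi(\Sigma)=\sum_i\chi(\Sigma_i)\leq 0$, and therefore $n\leq 2$, as claimed.

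The only delicate points I anticipate are exactly the two standard facts the argument rests on: that one may assume $\Sigma$ has no closed components, and that a disk piece genuinely produces a splitting sphere. Both are routine, but I would state them carefully, since the entire conclusion hinges on establishing that every connected piece of $\Sigma$ has non-positive Euler characteristic.
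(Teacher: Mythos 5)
Your proof is correct, and it rests on the same two essential ingredients as the paper's argument: a connected-component decomposition of a surface realizing $G_3$, and the observation that a disk piece of such a surface produces a splitting sphere, contradicting non-splitness. What differs is how the counting is organized. The paper first rewrites the hypothesis as $1=G_3(L)=g(F)+n-k$ (with $g(F)$ the total genus and $k$ the number of pieces), then runs a case analysis over $(g(F),n-k)\in\{(1,0),(0,1)\}$, locating the disk pieces explicitly in each case and deriving the contradiction; you instead rule out disk pieces once and for all, conclude $\chi(\Sigma_i)\leq 0$ for every piece, and sum to get $n-2=\chi(\Sigma)\leq 0$. Your arrangement avoids the case analysis and generalizes immediately: the same argument shows that any non-split link satisfies $n\leq 2G_3(L)$. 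What the paper's case analysis buys instead is an explicit description of the surface---in particular, that for a non-split two-component link the norm-minimizing surface is a single annulus, which is exactly the fact invoked right after the lemma in the sutured Floer argument. Your proof recovers this too, though you do not state it: when $n=2$ equality forces $\chi(\Sigma_i)=0$ for every piece, so each piece is an annulus, and since $\Sigma$ has only two boundary circles it consists of a single annulus. Finally, the two delicate points you flag are handled correctly: excluding closed components is the standard convention that makes $\chi_3$ finite, and the regular-neighborhood sphere is exactly the right justification that a disk piece splits the link.
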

\begin{proof}
 Say $F\hookrightarrow S^3$ is a surface that realizes the Seifert big genus $G_3$. A quick computation shows that $1=G_3(L)=g(F)+n-k$, where $n$ is the number of components of $L$ and $k$ the number of connected components of $F$. Hence, since $g(F)$ and $n-k$ are both non-negative, we have that either \[\left\{\begin{aligned}
 &g(F)=1 \\
 &n=k 
 \end{aligned}\right.\hspace{1cm}\text{ or }\hspace{1cm}\left\{\begin{aligned}
 &g(F)=0 \\
 &n-1=k 
 \end{aligned}\right.\:.\] In the first case $F$ has $n$ connected components $F_1,...,F_n$, each one with a knot as boundary; this implies that $F_2,...,F_n$ are all diffeomorphic to disks (up to relabelling) and then $L$ has $n-1$ unlinked unknotted components. This is a contradiction, since $L$ is non-split, unless $n=1$.

 In the second case $F$ has $n-1$ connected components $F_1,...,F_{n-1}$, each one with a knot as boundary except one of them which is an annulus; this implies that $F_2,...,F_{n-1}$ are again diffeomorphic to disks (again up to relabelling) and, in the same way as before, we obtain that $n=2$.
\end{proof}

Here, we pass to sutured Floer homology, as defined by Juh\'asz in \cite{J}. We recall that a \emph{balanced sutured manifold} is a pair $(M, \gamma)$, where $M$ is a compact oriented 3-manifold with boundary and with no closed components, and the sutures $\gamma\subset\partial M$ are oriented $1$-manifolds, meeting each boundary component, that divide the boundary into two subsurfaces $R_+$ and $R_-$, such that $\chi(R_+)=\chi(R_-)$. Similarly to the link case, balanced sutured manifolds can be presented by multi-pointed Heegaard diagrams, and then sutured Floer homology is constructed in the same way and as a generalization of the hat version of Heegaard Floer homology; it assigns to $(M,\gamma)$ a finitely generated vector space $SFH(M,\gamma)$ which splits along relative $\Spin^c$ structures. The main advantage of the sutured Floer viewpoint is its neat behavior under sutured manifold decomposition.

In the case of links, we refer to a link complement with a pair of (oppositely oriented)  meridional sutures on each boundary component as the sutured link complement $S^3(L):=\left(S^3\setminus\mathring{\nu(L)}, \gamma_\mu\right)$. This way we have an isomorphism $SFH(S^3(L))\cong\widehat{\textbf{HFL}}(L)$ where the Alexander multi-grading is recovered by evaluations of relative $\Spin^c$ structures on particular surfaces bounded by components of $L$; in particular, for the collapsed Alexander grading we use a Seifert surface of $L$. Furthermore, looking at the sutured Seifert surface complement $S^3(F)=(S^3\setminus\mathring{\nu(F)},\gamma_F):=\left(S^3\setminus F\times(-1,1),\partial F\times\{0\}\right)$, the corresponding subspace is exactly the top Alexander grading summand $SFH(S^3(F))\cong\widehat{HFL}(L)[G_3(L)]$. 

Returning to our analysis of non-split links, we have from Lemma \ref{lemma:2} that $L$ bounds an annulus $F$ in $S^3$, and by assumption $SFH(S^3(F))\cong\widehat{HFL}(L)[G_3(L)]$ is of rank 2. Now, to understand our $(S^3\setminus\mathring{\nu(F)},\gamma_F)$ we follow the argument of Baldwin and Sivek from \cite[Section 5]{BS}, which, in turn, relies on the properties of link complements with rank-2 sutured Floer homology \cite{Juhasz} and some classical facts about $L$-space knots. We get, as in \cite[Theorem 5.1]{BS}, that up to reversing the orientation $(S^3\setminus\mathring{\nu(F)},\gamma_F)$, whose boundary is a torus, is identified with $(S^3\setminus\mathring{\nu(C)},\gamma_2)$, where $C$ is either the unknot $\bigcirc$ or the positive trefoil $T_{2,3}$ and the suture $\gamma_2$ is given by two parallel simple closed curves on $\partial\nu(F)$ oppositely oriented and with slope 2.

The idea now is to show that $L$ is actually isotopic to $C^\prime_{2,4}$, the $(2,4)$-cable of $C$ with oppositely oriented components, up to mirroring. This will complete the proof of Theorem \ref{teo:main}. 
\begin{prop}
 \label{prop:complement}
 Up to reversing the orientation, the complement $S^3\setminus\mathring{\nu(L)}$ is diffeomorphic to $S^3\setminus\mathring{\nu(C^\prime_{2,4})}$. Furthermore, there is such a diffeomorphism $\Phi$ for which $\Phi(\lambda_i)=\lambda_i^\prime$ with $i=1,2$, where $\lambda_i$ and $\lambda_i^\prime$ are longitudes for $L$ and $C^\prime_{2,4}$ respectively.
\end{prop}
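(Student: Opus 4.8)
The plan is to reconstruct the whole link complement from the sutured surface complement on both the unknown side and the model side, and then match the two reconstructions. First I would record the local picture of $L$: since $\nu(F)$ is a regular neighborhood of an embedded annulus, it is a solid torus whose core is a knot $C_0$, and $S^3\setminus\mathring{\nu(F)}=S^3\setminus\mathring{\nu(C_0)}$. The identification from the previous step gives a sutured diffeomorphism $(S^3\setminus\mathring{\nu(F)},\gamma_F)\cong(S^3\setminus\mathring{\nu(C)},\gamma_2)$, so $C_0$ has the complement of the unknot or of $T_{2,3}$; by the knot-complement theorem of Gordon and Luecke, $C_0=C$. Consequently $L=\partial F$ is a pair of parallel curves on $\partial\nu(C)$ of slope $2$, which is exactly the local data of the model cable $C^\prime_{2,4}$.

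Next I would make the reconstruction explicit. Writing $\nu(F)\cong F\times[-1,1]$, the link complement is recovered from the surface complement by gluing back the product piece $P:=\nu(F)\setminus\mathring{\nu(L)}$ along $R_+\cup R_-=F\times\{\pm1\}$; this is precisely the inverse of the sutured decomposition of $S^3(L)$ along $F$. Equivalently, cutting along the cabling torus $T=\partial\nu(C)$ presents $S^3\setminus\mathring{\nu(L)}$ as $\bigl(S^3\setminus\mathring{\nu(C)}\bigr)\cup\bigl(\nu(C)\setminus\mathring{\nu(L)}\bigr)$, glued along the two annuli $T\setminus\mathring{\nu(L)}$. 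The model complement $S^3\setminus\mathring{\nu(C^\prime_{2,4})}$ decomposes in the identical way, with the same solid-torus piece $\nu(C)\setminus\mathring{\nu(L)}$ and the same gluing annuli, since the slope is again $2$. I would then argue that, after possibly reversing orientation, the diffeomorphism of the previous step carries $\gamma_F$ to $\gamma_2$ and $R_\pm$ to $R_\pm$, hence extends by the standard identification of the product piece $P$ to a diffeomorphism $\Phi\colon S^3\setminus\mathring{\nu(L)}\to S^3\setminus\mathring{\nu(C^\prime_{2,4})}$.

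For the ``furthermore'' statement I would track the framing coming from the surface: the Seifert annulus $F$ induces a longitude $\lambda_i$ on each component $L_i$, and the model annulus bounded by $C^\prime_{2,4}$ induces $\lambda_i^\prime$ in the same way; since $\Phi$ is built to respect the annuli $F$ and their pushoffs, it sends $\lambda_i$ to $\lambda_i^\prime$, which I would confirm from the homological characterization of the longitudes on the peripheral tori. The main obstacle is the extension in the second step: the information we are handed is only an abstract sutured diffeomorphism of the annulus complements, which a priori remembers the complement and the suture slope but not the ambient product structure used to glue $P$ back in. The delicate points are therefore to check that both annular gluing regions are matched simultaneously with no residual Dehn twist along the core of the annulus, and to absorb the orientation ambiguity into the clause ``up to reversing orientation''; pinning the gluing down will use that the reconstruction must embed in $S^3$, together with the Gordon--Luecke identification of $C_0$ with $C$.
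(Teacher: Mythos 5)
Your reconstruction in the second paragraph is exactly the paper's argument: present $S^3\setminus\mathring{\nu(L)}$ as the sutured complement $S^3\setminus\mathring{\nu(F)}$ with the product piece $P\cong A\times[-1,1]$ glued back along the two annuli $R_\pm$ complementary to the sutures, transport this through the identification of $(S^3\setminus\mathring{\nu(F)},\gamma_F)$ with $(S^3\setminus\mathring{\nu(C)},\gamma_2)$ from \cite[Theorem 5.1]{BS}, recognize the result as the exterior of $C^\prime_{2,4}$, and read the longitudes off the image of the annulus (your third paragraph's use of $\partial A$ is the paper's $\lambda_1\sqcup\lambda_2=\partial A$, $\partial\Phi(A)=\lambda_1^\prime\sqcup\lambda_2^\prime$). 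The genuine gap is that you leave open precisely the step you flag as ``the main obstacle'': independence of the glued-up manifold from the choice of gluing maps. Your proposed fix --- that ``the reconstruction must embed in $S^3$, together with the Gordon--Luecke identification'' --- is not an argument: a priori different regluings could yield exteriors of different links, every one of which embeds in $S^3$, so embeddability selects nothing. The missing idea, which is what the paper compresses into the sentence ``such a gluing is determined by the attaching regions,'' is an absorption argument: two gluing maps $A\times\{\pm1\}\to R_\pm$ with the same image differ by mapping classes of the annulus, and the core Dehn twist is isotopic to the identity through diffeomorphisms of the annulus once the boundary is not required to be fixed pointwise (equivalently, it extends to a self-diffeomorphism $(\theta,s,t)\mapsto(\theta+2\pi s\beta(t),s,t)$ of $A\times[-1,1]$ that is the identity on the opposite end). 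Since gluing along isotopic maps gives diffeomorphic manifolds, no ``residual Dehn twist'' can occur; the remaining finite ambiguity (flips and reflections of $A$) is controlled by orientations and by the requirement that the result have two torus boundary components. With this lemma your plan closes and coincides with the paper's proof.

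Your first paragraph is worth a separate comment: it is the germ of a genuinely different and more direct route, which the paper acknowledges in the remark following the proposition (``the topology of the cabling construction'') but deliberately does not take. If you really prove that $L$ lies on $\partial\nu(C)$ as two parallel curves of slope $2$, then $L$ \emph{is} $C_{2,4}$ up to mirroring and the proposition --- indeed the non-split case of the main theorem --- follows immediately, making your second and third paragraphs unnecessary. But as written the step ``consequently $L$ is a pair of parallel curves of slope $2$'' is also unjustified: the identification of exteriors is abstract, and to transfer the suture slope you must know that it matches the peripheral coordinates, i.e. takes the meridian of $C$ to the meridian of $C_0$. For $C=T_{2,3}$ this follows from Gordon--Luecke (the meridian is the unique slope whose filling gives $S^3$), but for $C$ the unknot the exterior is a solid torus whose self-diffeomorphisms do not preserve the meridian, and one needs a separate argument (the compressing-disc slope is preserved, which pins the meridional coefficient of the sutures to $\pm2$). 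So either complete the first paragraph along these lines and discard the rest, or keep the reconstruction and supply the absorption lemma above.
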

\begin{proof}
 The manifold $S^3\setminus\mathring{\nu(L)}$ is obtained by gluing up the thickened annulus $A\times[-1,1]$ along $A\times \{-1,1\}$ to $S^3\setminus\mathring{\nu(F)}$. Such a gluing is determined by the attaching regions: the two parallel annuli complementary to a neighborhood of the sutures $\mathring{\nu(\gamma_F)}$. 
 Therefore, using again \cite[Theorem 5.1]{BS} we get that $S^3\setminus\mathring{\nu(L)}$ is diffeomorphic to the 3-manifold given by attaching $A\times[-1,1]$ to $S^3\setminus\mathring{\nu(C)}$ along  $\partial\nu(C)\setminus\mathring{\nu(\gamma_2)}$, which is precisely $S^3\setminus\mathring{\nu(C^\prime_{2,4})}$. We denote this diffeomorphism by $\Phi$ and we observe that $\Phi(A)$ is an annulus whose boundary is $C_{2,4}^\prime$.

 For the second part of the statement, we note that by the definition of longitude one has $\lambda_1\sqcup\lambda_2=\partial A$; and also $\lambda^\prime_1\sqcup\lambda^\prime_2=\partial \Phi(A)$ because $\Phi$ is orientation preserving. Hence, we can write
 \[\lambda^\prime_1\sqcup\lambda^\prime_2=\partial \Phi(A)=\Phi(\partial A)=\Phi(\lambda_1\sqcup\lambda_2)=\Phi(\lambda_1)\sqcup \Phi(\lambda_2)\] which implies $\Phi(\lambda_i)=\lambda_i^\prime$ for $i=1,2$ 
\end{proof}
\begin{remark}
 It is known that the Gordon-Luecke theorem does not hold for links: there is an infinite family of links, obtained by performing annular twisting on the Whitehead link, which have diffeomorphic complements but are all non-isotopic. Proposition \ref{prop:complement} is then by itself not enough to show that $L$ is isotopic to $C_{2,4}^\prime$. Since we are considering a link $L$ which bounds an embedded annulus in $S^3$, this could also be proved using the topology of the cabling construction; nonetheless, in this paper we decided to include a more direct proof.
 

\end{remark}
Our strategy continues as follows: we are going to prove that the diffeomorphism $\Phi$ from Proposition \ref{prop:complement} preserves the peripheral system of $L$; and since the latter is known to be a complete isotopy invariant of links \cite{W}, this is enough to prove Theorem \ref{teo:main}. We then need the following two lemmas.
\begin{lemma}
 \label{lemma:1}
 A link $L$ whose complement is diffeomorphic to the one of $C_{2,4}'$, through a diffeomorphism $\Phi$ which preserves the longitudes, has linking number $\lk(L)$ equal to $\pm2$.   
\end{lemma}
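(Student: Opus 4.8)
The plan is to read off the linking number from the first homology of the link complement and transport it through $\Phi$. First I recall the homological meaning of the longitudes: writing $X=S^3\setminus\mathring{\nu(L)}$ for a two-component link $L=L_1\cup L_2$, we have $H_1(X)\cong\Z^2$ with basis the meridian classes $[\mu_1],[\mu_2]$, and the Seifert-framed longitudes satisfy $[\lambda_1]=\ell\,[\mu_2]$ and $[\lambda_2]=\ell\,[\mu_1]$, where $\ell=\lk(L)$. This is because $\lambda_i$ bounds a Seifert surface for $L_i$ that meets $L_{3-i}$ algebraically $\ell$ times.

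Next I would compute the linking number of the model link $C_{2,4}'$. Its two components are two parallel copies of the slope-$2$ curve on $\partial\nu(C)$, that is, two parallel $2$-framed longitudes of $C$; since two parallel $f$-framed pushoffs of a knot link exactly $f$ times, we get $\lk(C_{2,4}')=\pm2$, and crucially this value does not depend on whether $C$ is the unknot or the positive trefoil, because the Seifert longitude is null-homologous in $S^3\setminus C$ and so the knotting of the companion contributes nothing to the mutual linking of the cable components.

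The key step is to produce a diffeomorphism invariant that sees $\ell$ but only through the longitudes, since $\Phi$ is assumed to preserve the longitudes but need not preserve the meridians. To this end, consider the subgroup $\Lambda=\langle[\lambda_1],[\lambda_2]\rangle\leq H_1(X)$; the quotient $H_1(X)/\Lambda$ is exactly $H_1$ of the manifold obtained by $0$-framed Dehn filling along both components, and a direct computation gives that it is isomorphic to $(\Z/\ell)^2$ of order $\ell^2$ when $\ell\neq0$, and infinite when $\ell=0$. Because $\Phi_*\colon H_1(X)\to H_1\bigl(S^3\setminus\mathring{\nu(C_{2,4}')}\bigr)$ is an isomorphism carrying $[\lambda_i]$ to $[\lambda_i']$, it sends $\Lambda$ isomorphically onto the corresponding subgroup $\Lambda'$, whence $H_1(X)/\Lambda\cong H_1(X')/\Lambda'$.

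Combining the last two steps forces $\ell^2=\lk(C_{2,4}')^2=4$. The degenerate case $\ell=0$ is ruled out immediately, since it would make the left-hand quotient infinite while the right-hand one is finite of order $4$; hence $\ell=\pm2$, as claimed. I expect the only delicate point to be the computation of $\lk(C_{2,4}')$ for the trefoil companion, where one must carefully use that the mutual linking of two parallel cable components is governed by the slope (framing) alone and is insensitive to the knotting of $C$, together with keeping the orientation conventions straight so that the sign in $\pm2$ is accounted for.
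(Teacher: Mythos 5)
Your proof is correct and takes essentially the same approach as the paper: the paper likewise fills along the longitudes (noting that $\Phi$ preserving longitudes makes the $0$-surgeries diffeomorphic) and compares first homology via $\lk^2(L)=|H_1(S^3_{0,0}(L);\Z)|=|H_1(S^3_{0,0}(C_{2,4});\Z)|=4$. Your version simply recasts this as the quotient $H_1(X)/\langle[\lambda_1],[\lambda_2]\rangle$ and spells out two details the paper leaves implicit, namely the computation $\lk(C_{2,4}')=\pm 2$ from the slope-$2$ framing and the exclusion of the case $\lk(L)=0$.
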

\begin{proof}
 Since $\Phi$ preserves the longitudes $\lambda_i$, we have that $L$ and $C_{2,4}$ have diffeomorphic $0$-surgeries (with respect to the framings given by $\lambda_i$ and $\Phi(\lambda_i))$. Then one has \[\lk^2(L)=|H_1(S^3_{0,0}(L);\Z)|=|H_1(S^3_{0,0}(C_{2,4});\Z)|=4\:.\]
\end{proof}
Let us denote the meridians of $L$ and $C_{2,4}^\prime$ by $\mu_i$ and $\mu_i'$ respectively for $i=1,2$. Since $\Phi$ is an orientation preserving  diffeomorphism we should have \[\left\{\begin{aligned}
&\Phi(\mu_1)=\mu_1'+p\cdot\lambda'_1 \\
&\Phi(\mu_2)=\mu_2'+q\cdot\lambda'_2\end{aligned}\right.\]
for some $p,q\in\Z$.
\begin{figure}[ht]
 \centering
 \begin{subfigure}[b]{0.3\textwidth}
 \def\svgwidth{10.5cm}
\begingroup%
  \makeatletter%
  \providecommand\color[2][]{%
    \errmessage{(Inkscape) Color is used for the text in Inkscape, but the package 'color.sty' is not loaded}%
    \renewcommand\color[2][]{}%
  }%
  \providecommand\transparent[1]{%
    \errmessage{(Inkscape) Transparency is used (non-zero) for the text in Inkscape, but the package 'transparent.sty' is not loaded}%
    \renewcommand\transparent[1]{}%
  }%
  \providecommand\rotatebox[2]{#2}%
  \newcommand*\fsize{\dimexpr\f@size pt\relax}%
  \newcommand*\lineheight[1]{\fontsize{\fsize}{#1\fsize}\selectfont}%
  \ifx\svgwidth\undefined%
    \setlength{\unitlength}{1450.68595684bp}%
    \ifx\svgscale\undefined%
      \relax%
    \else%
      \setlength{\unitlength}{\unitlength * \real{\svgscale}}%
    \fi%
  \else%
    \setlength{\unitlength}{\svgwidth}%
  \fi%
  \global\let\svgwidth\undefined%
  \global\let\svgscale\undefined%
  \makeatother%
  \begin{picture}(1,0.58759659)%
    \lineheight{1}%
    \setlength\tabcolsep{0pt}%
    \put(0,0){\includegraphics[width=\unitlength,page=1]{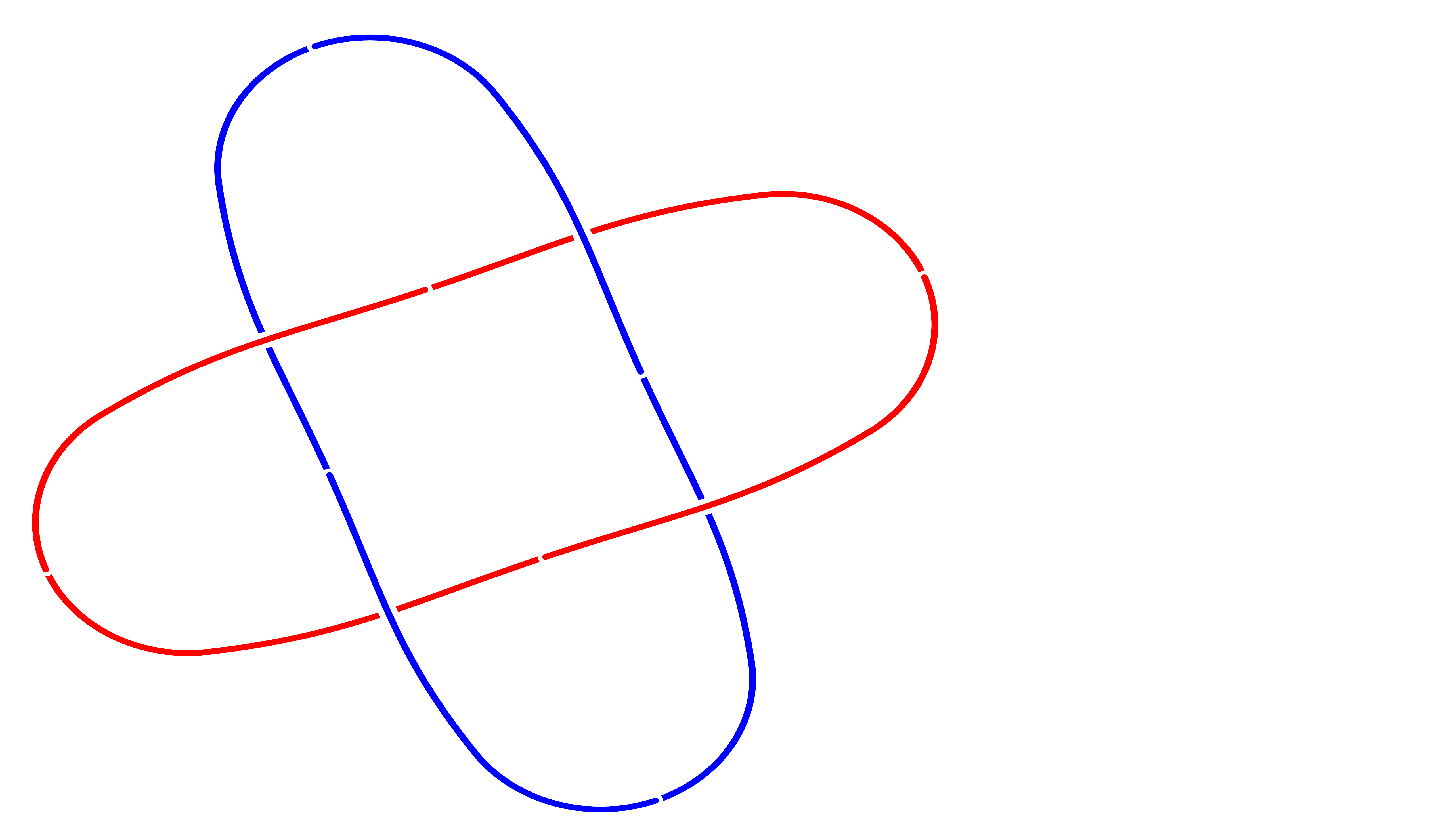}}%
    \put(0.37830519,0.55576898){\color[rgb]{0,0,0}\makebox(0,0)[lt]{\lineheight{1.25}\smash{\begin{tabular}[t]{l}$\frac{1}{q+1}$\end{tabular}}}}%
    \put(0.63616351,0.24259154){\color[rgb]{0,0,0}\makebox(0,0)[lt]{\lineheight{1.25}\smash{\begin{tabular}[t]{l}$\frac{1}{p+1}$\end{tabular}}}}%
  \end{picture}%
\endgroup%
   
 \end{subfigure}
 \hspace{3cm}
 \begin{subfigure}[b]{0.3\textwidth}
 \centering
 \def\svgwidth{7.5cm}
\begingroup%
  \makeatletter%
  \providecommand\color[2][]{%
    \errmessage{(Inkscape) Color is used for the text in Inkscape, but the package 'color.sty' is not loaded}%
    \renewcommand\color[2][]{}%
  }%
  \providecommand\transparent[1]{%
    \errmessage{(Inkscape) Transparency is used (non-zero) for the text in Inkscape, but the package 'transparent.sty' is not loaded}%
    \renewcommand\transparent[1]{}%
  }%
  \providecommand\rotatebox[2]{#2}%
  \newcommand*\fsize{\dimexpr\f@size pt\relax}%
  \newcommand*\lineheight[1]{\fontsize{\fsize}{#1\fsize}\selectfont}%
  \ifx\svgwidth\undefined%
    \setlength{\unitlength}{950.20580319bp}%
    \ifx\svgscale\undefined%
      \relax%
    \else%
      \setlength{\unitlength}{\unitlength * \real{\svgscale}}%
    \fi%
  \else%
    \setlength{\unitlength}{\svgwidth}%
  \fi%
  \global\let\svgwidth\undefined%
  \global\let\svgscale\undefined%
  \makeatother%
  \begin{picture}(1,0.86052951)%
    \lineheight{1}%
    \setlength\tabcolsep{0pt}%
    \put(0,0){\includegraphics[width=\unitlength,page=1]{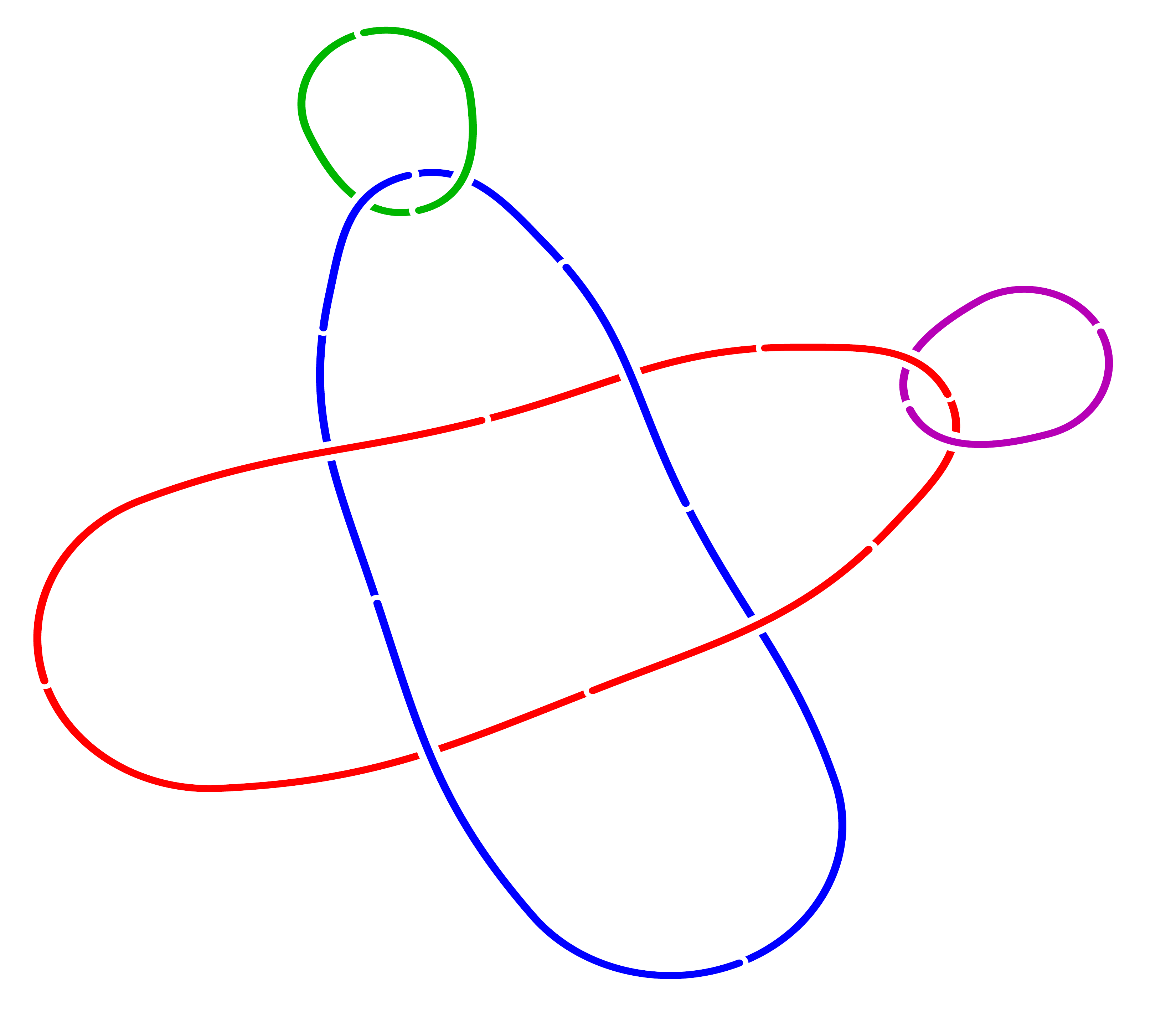}}%
    \put(0.08530152,0.28585313){\color[rgb]{0,0,0}\makebox(0,0)[lt]{\lineheight{1.25}\smash{\begin{tabular}[t]{l}$0$\end{tabular}}}}%
    \put(0.54708542,0.08424838){\color[rgb]{0,0,0}\makebox(0,0)[lt]{\lineheight{1.25}\smash{\begin{tabular}[t]{l}$0$\end{tabular}}}}%
    \put(0.75550116,0.67544068){\color[rgb]{0,0,0}\makebox(0,0)[lt]{\lineheight{1.25}\smash{\begin{tabular}[t]{l}$-(p+1)$\end{tabular}}}}%
    \put(0.04443568,0.75308576){\color[rgb]{0,0,0}\makebox(0,0)[lt]{\lineheight{1.25}\smash{\begin{tabular}[t]{l}$-(q+1)$\end{tabular}}}}%
  \end{picture}%
\endgroup%

 \end{subfigure}
 \caption{Slam-dunks performed on the link $C_{2,4}$, here the orientation is not relevant. One has that the 3-manifold represented by the surgery diagram on the left is diffeomorphic to the one on the right.}
 \label{Slamdunk}
 \end{figure}
\begin{lemma}
 If the link $L$ and the diffeomorphism $\Phi$ are as previously defined then we have $p=q=0$ and then $\Phi$ also preserves meridians.   
\end{lemma}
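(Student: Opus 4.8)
The plan is to pin down $p$ and $q$ homologically, by feeding the meridian relations $\Phi(\mu_1)=\mu_1'+p\,\lambda_1'$, $\Phi(\mu_2)=\mu_2'+q\,\lambda_2'$ into the isomorphism $\Phi_*$ on first homology and comparing it with the standard linking-number relations of a two-component link complement. Since $\Phi$ preserves longitudes, $\Phi_*[\lambda_i]=[\lambda_i']$, and the curve-level relations give $\Phi_*[\mu_1]=[\mu_1']+p\,[\lambda_1']$ and $\Phi_*[\mu_2]=[\mu_2']+q\,[\lambda_2']$; the coefficient $+1$ on $\mu_i'$ is forced because $\Phi$ is orientation preserving and already fixes the longitude.

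Set $\ell=\lk(L)$ and $\ell'=\lk(C^\prime_{2,4})$. In $H_1\bigl(S^3\setminus\mathring{\nu(L)}\bigr)=\Z\langle\mu_1\rangle\oplus\Z\langle\mu_2\rangle$ the longitudes satisfy $[\lambda_1]=\ell\,[\mu_2]$ and $[\lambda_2]=\ell\,[\mu_1]$, since a Seifert surface of one component meets the other in $\ell$ points; the same holds for $C^\prime_{2,4}$, with $\ell'=\pm2\neq0$ because $C^\prime_{2,4}$ is a $(2,4)$-cable. Applying $\Phi_*$ to $[\lambda_1]=\ell\,[\mu_2]$ and substituting $\Phi_*[\lambda_1]=[\lambda_1']=\ell'[\mu_2']$ and $\Phi_*[\mu_2]=[\mu_2']+q\ell'[\mu_1']$ yields $\ell'[\mu_2']=\ell\,[\mu_2']+\ell q\ell'\,[\mu_1']$; comparing coefficients in the basis $\{\mu_1',\mu_2'\}$ forces $\ell=\ell'$ and $\ell q\ell'=0$, hence $q=0$. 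The symmetric computation starting from $[\lambda_2]=\ell\,[\mu_1]$ gives $p=0$. As $p,q$ are genuine integers in the torus-level relations and $\ell'\neq0$ reads them off through the coefficient of the opposite meridian, this shows $p=q=0$, so $\Phi(\mu_i)=\mu_i'$ and $\Phi$ preserves meridians as claimed.

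The same conclusion can be reached through Dehn surgery, which is the content of Figure \ref{Slamdunk}: filling $L$ along $\mu_1,\mu_2$ gives back $S^3$, so via $\Phi$ the corresponding filling of $C^\prime_{2,4}$ along the slopes $\mu_i'+p_i\lambda_i'$ must be $S^3$ as well; slam-dunking the two rational surgeries on $C_{2,4}$ produces the integral diagram on the right of Figure \ref{Slamdunk}, whose linking matrix has determinant (up to sign) a polynomial in $p$ and $q$, and imposing $|H_1|=1$ again forces $p=q=0$. I expect the only delicate points to be bookkeeping: checking that $\Phi$ matches the two boundary tori with the correct indices and orientations, and recording the correct value $\ell'=\pm2$; once $\ell'\neq0$ is in hand the algebra is immediate. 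With $p=q=0$ the diffeomorphism $\Phi$ preserves the whole peripheral system, which by the completeness of that invariant \cite{W} promotes it to an isotopy $L\simeq C^\prime_{2,4}$ and finishes Theorem \ref{teo:main}.
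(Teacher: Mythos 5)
Your main argument is correct and genuinely different from the paper's proof. The paper pins down $p$ and $q$ with Dehn surgery: since $\Phi$ carries the slope $\mu_i+\lambda_i$ to $\mu_i'+(p+1)\lambda_i'$ (resp.\ $(q+1)$), the $(1,1)$-surgery on $L$, whose first homology has order $3$ by Lemma \ref{lemma:1}, is diffeomorphic to $S^3_{\frac{1}{p+1},\frac{1}{q+1}}(C_{2,4})$; after the slam-dunks of Figure \ref{Slamdunk} this yields $|4(p+1)(q+1)-1|=3$, hence $(p+1)(q+1)=1$, and the leftover possibility $p=q=-2$ is excluded by repeating the computation with the $(-1,-1)$-surgery. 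Your computation instead never leaves the complement: you apply $\Phi_*$ on $H_1$ to the relations $[\lambda_1]=\ell[\mu_2]$, $[\lambda_2]=\ell[\mu_1]$, and compare coefficients in the basis $\{[\mu_1'],[\mu_2']\}$, obtaining $\ell=\ell'$ and $\ell\ell'q=\ell\ell'p=0$, so $p=q=0$ because $\ell'=\pm2\neq0$. This is more elementary (no Kirby calculus, no case analysis), and it bypasses Lemma \ref{lemma:1} altogether, in fact reproving it in the sharper form $\lk(L)=\lk(C_{2,4}')$.

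However, your closing claim that the same conclusion can be reached by filling along the meridians is incorrect. Meridian filling of $L$ corresponds to the coefficients $\frac{1}{p},\frac{1}{q}$ on $C_{2,4}$, and the resulting homology condition is $|4pq-1|=1$, which only gives $pq=0$, not $p=q=0$. The failure is essential rather than computational: when $C$ is the unknot, filling the complement of $T_{2,4}'$ along $\mu_1'$ and $\mu_2'+q\lambda_2'$ produces $S^3$ for every $q$ (after the first filling one is performing $1/q$-surgery on an unknot), so no invariant of the filled manifold can force $q=0$ by this route. This is exactly why the paper fills along $\mu_i+\lambda_i$ rather than along $\mu_i$. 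Since your homological argument is complete on its own, this does not affect your proof; simply drop the surgery paragraph, or replace it with the paper's $(1,1)$-filling version.
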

\begin{proof}
 Proposition \ref{prop:complement} and Lemma \ref{lemma:1} imply that the $(1,1)$-surgery on $L$ is diffeomorphic to $S^3_{\frac{1}{p+1},\frac{1}{q+1}}(C_{2,4})$. Therefore, after applying two slam-dunks as in Figure \ref{Slamdunk}, see \cite{GS} for the precise definition, we have the following relation
 \[3=|H_1(S^3_{1,1}(L);\Z)|=\left|H_1\left(S^3_{\frac{1}{p+1},\frac{1}{q+1}}(C_{2,4});\Z\right)\right|=\left|\det\begin{pmatrix}
 -(p+1) & 1 & 0 & 0 \\
 1 & 0 & 2 & 0 \\
 0 & 2 & 0 & 1 \\
 0 & 0 & 1 & -(q+1)
\end{pmatrix}\right|=|4(p+1)(q+1)-1|\]
which implies a contradiction unless either $p=q=0$ or $p=q=-2$, but in the latter case we can repeat the same argument to obtain
\[3=|H_1(S^3_{-1,-1}(L);\Z)|=\left|H_1\left(S^3_{-\frac{1}{3},-\frac{1}{3}}(C_{2,4});\Z\right)\right|=\left|\det\begin{pmatrix}
 3 & 1 & 0 & 0 \\
 1 & 0 & 2 & 0 \\
 0 & 2 & 0 & 1 \\
 0 & 0 & 1 & 3
\end{pmatrix}\right|=35\] which is again a contradiction.
\end{proof}
Proving that $\Phi$ preserves both meridians and longitudes means exactly that the peripheral system of $L$ is invariant under $\Phi$ and this concludes the proof of Theorem \ref{teo:main}, as we mentioned before.

\section{Computation of the link Floer homology groups}
We compute $\widehat{HFL}(L)$ for any of the links in Theorem \ref{teo:main}. It can be observed from Table \ref{Table} that all such links have different homology and then their isotopy class is determined by $\widehat{HFL}$.
\begin{table}[ht]
\begin{center}
 \renewcommand{\arraystretch}{2}
  \begin{tabular}{ | c || c | c | c | }
    \hline
    $\widehat{HFL}_d(L)[s]$ & $s=-1$ & $s=0$ & $s=1$ \\ \hline
    $T_{2,3}\sqcup\bigcirc$ & $\F_{(-3)}\oplus\F_{(-2)}$ & $\F_{(-2)}\oplus\F_{(-1)}$ & $\F_{(-1)}\oplus\F_{(0)}$ \\ \hline
    $T_{2,-3}\sqcup\bigcirc$ & $\F_{(-1)}\oplus\F_{(0)}$ & $\F_{(0)}\oplus\F_{(1)}$ & $\F_{(1)}\oplus\F_{(2)}$ \\ \hline
    $4_1\sqcup\bigcirc$ & $\F_{(-2)}\oplus\F_{(-1)}$ & $\F_{(-1)}\oplus\F_{(0)}$ & $\F_{(0)}\oplus\F_{(1)}$ \\ \hline 
    $T_{2,2}\sqcup\bigcirc$ & $\F_{(-3)}\oplus\F_{(-2)}$ &  $\F_{(-2)}^2\oplus\F_{(-1)}^2$ & $\F_{(-1)}\oplus\F_{(0)}$ \\ \hline
    $T_{2,-2}\sqcup\bigcirc$ & $\F_{(-2)}\oplus\F_{(-1)}$ & $\F_{(-1)}^2\oplus\F_{(0)}^2$ & $\F_{(0)}\oplus\F_{(1)}$ \\ \hline
    $T_{2,4}'$ & $\F_{(-2)}^2$ & $\F^4_{(-1)}$ & $\F^2_{(0)}$ \\ \hline
    $T_{2,-4}'$ & $\F^2_{(-1)}$ & $\F^4_{(0)}$ & $\F^2_{(1)}$ \\ \hline 
    $T_{2,3;2,4}'$ & $\F^2_{(-3)}$ & $\F^4_{(-2)}\oplus\F_{(-1)}\oplus\F_{(0)}$ & $\F^2_{(-1)}$ \\ \hline 
    $T_{2,-3;2,-4}'$ & $\F^2_{(0)}$ & $\F_{(-1)}\oplus\F_{(0)}\oplus\F^4_{(1)}$ & $\F^2_{(2)}$ \\ \hline
  \end{tabular}
 \renewcommand{\arraystretch}{1}
\end{center}
\caption{The link Floer homology group, with collapsed Alexander grading, of every nearly fibered (multi-component) link with Seifert big genus equal to one.}
\label{Table}
\end{table} 
The homology groups of the mirror images and the split links are computed using Equations \eqref{eq:mirror} and \eqref{eq:disjoint}, starting from $\widehat{HFL}$ of the links in Theorem \ref{teo:classic} which can be found in \cite{Book}. 
The homology of $T_{2,4}'$ is also found in literature \cite{Multivariable}; hence, we are only left to discuss how to compute $\widehat{HFL}(T_{2,3;2,4}')$.

We start by computing the multi-graded link Floer homology group $\widehat{\textbf{HFL}}(T_{2,3;2,4})$, where this time the $(2,4)$-cable of the trefoil knot has components oriented in the same direction. Such a link is an $L$-space link and a cable of an $L$-space knot and then we can use the following result.
\begin{teo}[Gorsky-Hom \cite{GH}]
 Suppose that $K$ is an $L$-space knot in $S^3$; then the $(rm,rn)$-cable of $K$ is an $L$-space link for every $m,n$ coprime such that $\frac{n}{m}>2g(K)-1$ and $r\geq1$.

 Furthermore, the homology group $\widehat{\emph{\textbf{HFL}}}(K_{rm,rn})$ is completely determined by $m,n,r$ and the Alexander polynomial of $K$. 
\end{teo}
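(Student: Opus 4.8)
The plan is to prove the two assertions separately: first that $K_{rm,rn}$ is an $L$-space link, and then that its multi-graded homology is a function of the Alexander polynomial $\Delta_K$ and of $m,n,r$. Throughout I would work with the \emph{$H$-function} of an $L$-space link, the integer-valued function on the Alexander lattice that records the large-surgery $d$-invariants; the two facts I would rely on are that for an $L$-space knot the $H$-function (equivalently the torsion coefficients) is read off directly from the Alexander polynomial, and that for an $L$-space link the whole group $\widehat{\textbf{HFL}}$ is determined by its $H$-function. Granting these, the theorem reduces to (i) verifying the $L$-space condition and (ii) expressing $H_{K_{rm,rn}}$ through $H_K$.

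For (i) I would exploit the satellite structure of the cable. The link $K_{rm,rn}$ sits on $\partial\nu(K)$, so its complement splits along that torus into the companion complement $S^3\setminus\mathring{\nu(K)}$ and the complement of the torus link $T_{rm,rn}$ inside a solid torus. Performing a large surgery on all $r$ components therefore only modifies the solid-torus side, and a sequence of slam-dunks and Rolfsen twists — of exactly the type used in Figure \ref{Slamdunk} — rewrites the result as a surgery on the single knot $K$ together with a lens-space/Seifert piece that is automatically an $L$-space. The surgery coefficient induced on $K$ is governed by the cabling slope $n/m$, and the hypothesis $\frac{n}{m}>2g(K)-1$ is precisely what places this coefficient in the cone of slopes for which an $L$-space knot has $L$-space surgeries. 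This is the multi-component analogue of the cabling criterion for $L$-space knots, and carrying it out for all large framings simultaneously gives the $L$-space link property.

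For (ii) I would first invoke the structure theorem above to reduce the homology computation to the determination of $H_{K_{rm,rn}}$ on the $r$-dimensional lattice. The Euler characteristic constraint fixes the graded Euler characteristic of $\widehat{\textbf{HFL}}(K_{rm,rn})$ to be the multivariable Alexander polynomial, which the Torres/cabling formula expresses in terms of $\Delta_K$ and $m,n,r$; it remains to upgrade this from Euler characteristic to the actual $H$-function. For this I would establish a cabling formula for the $H$-function: the value of $H_{K_{rm,rn}}$ at a lattice point is an explicit combination (an appropriate minimum, or staircase reassembly) of the values of $H_K$, the combinatorial shift being dictated by $m,n,r$ and by the contribution of the torus link $T_{rm,rn}$. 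Since $H_K$ is itself determined by $\Delta_K$, this exhibits $H_{K_{rm,rn}}$, and hence $\widehat{\textbf{HFL}}(K_{rm,rn})$, as a function of $\Delta_K$ and $m,n,r$, completing the proof.

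The main obstacle is step (ii), and more precisely the passage from the Euler-characteristic identity to the cabling formula for the $H$-function. Knowing the Alexander polynomial only pins down an alternating sum of ranks, whereas the assertion is about the ranks themselves; closing this gap requires genuinely computing the filtered cable complex — most cleanly via a bordered-Floer pairing of the module of the companion $K$ with the standard bimodule of the $(m,n)$-pattern in the solid torus — and checking that the $L$-space hypothesis forces all higher differentials to collapse, so that the homology is pinned down by $H_{K_{rm,rn}}$ alone. The role of $\frac{n}{m}>2g(K)-1$ throughout is to guarantee exactly this collapse, both in establishing the $L$-space property in (i) and in making the $H$-function formula in (ii) take its simple closed form.
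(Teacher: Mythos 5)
You should first be aware that the paper does not prove this statement at all: it is quoted verbatim from Gorsky--Hom \cite{GH}, and the only justification in the paper is the citation itself, together with the two ingredients invoked right after it, namely \cite[Theorem 3]{GH} (which expresses $\widehat{\textbf{HFL}}$ of an $L$-space link through its $h$-function) and \cite[Lemma 4.5]{GH} (which computes the $h$-function of the cable from that of the companion). So your proposal can only be measured against the argument of \cite{GH} itself. At the level of strategy it does track that argument: your part (i) (the $L$-space property via the satellite structure of large surgeries and the slope condition $n/m>2g(K)-1$, generalizing the Hedden--Hom cabling criterion) and your part (ii) (the homology determined by the $h$-function plus a cabling formula for the $h$-function, with $h(K)$ read off the staircase complex and hence off $\Delta_K$) are exactly the two halves of the Gorsky--Hom proof.

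There are, however, two genuine gaps. First, in (i) the phrase ``a lens-space/Seifert piece that is \emph{automatically} an $L$-space'' conceals the entire difficulty. For a generic large filling of all $r$ components, the pattern side becomes a Seifert fibered piece over a disk with several exceptional fibers, and the surgered manifold is the union of this piece with the exterior of $K$ along an \emph{incompressible} torus: a toroidal manifold, not a connected sum and not a surgery on $K$. $L$-spaces are not closed under gluing along incompressible tori, so nothing here is automatic; showing that precisely these gluings are $L$-spaces is where the hypothesis $n/m>2g(K)-1$ has to do real work (through the fact that the one-component cable $K_{m,n}$ is an $L$-space knot, plus a genuine surgery argument for the multi-component case), and your sketch simply asserts the conclusion. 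Second, in (ii) the obstacle you single out --- inverting the Euler-characteristic relation, to be repaired by a bordered-Floer computation --- is a misdiagnosis: no such inversion occurs in \cite{GH}, and no bordered Floer theory is needed. The $h$-function is by definition a package of $d$-invariants of large surgeries; once step (i) identifies those surgered manifolds, $h(K_{rm,rn})$ is computed from $h(K)$ by additivity of $d$-invariants under connected sum and the known $d$-invariants of surgeries on $L$-space knots, which is the content of \cite[Lemma 4.5]{GH}. The remaining passage from the $h$-function to $\widehat{\textbf{HFL}}$ is \cite[Theorem 3]{GH}, whose proof (via the large-surgery formula for links) your plan grants as a black box. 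Since you also grant the cabling formula itself (``an appropriate minimum, or staircase reassembly''), essentially all of the content of the theorem ends up assumed rather than proved.
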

In our case we have $m=1,\:n=2,\:r=2$ and $K=T_{2,3}$. Since $2\geq1=2g(T_{2,3})-1$ the hypothesis of the theorem are satisfied. 
The formulae in \cite[Theorem 3]{GH} express $\widehat{\textbf{HFL}}(K_{rm,rn})$ in terms of the $h$-function of $K_{rm,rn}$, another link invariant introduced in \cite{GN} which as expected is also determined by the Alexander polynomial; moreover, in \cite[Lemma 4.5]{GH} it is also explained how to compute $h(K_{rm,rn})$ from $h(K)$. 

Finding the $h$-function of an $L$-space knot is very simple, given the structure of the knot Floer chain complex of this family of knots (see \cite{Qsurgery}), and in fact we can immediately write
\[h(T_{2,3})(k)=\left\{\begin{aligned}
    0\hspace{1cm}&k\geq1 \\
    1\hspace{1cm}&k=0 \\
    -k\hspace{1cm}&k\leq-1
\end{aligned}\right.\hspace{1cm}\text{ and }\hspace{1cm}h(T_{2,3;2,4})(k,k)=\left\{\begin{aligned}
    0\hspace{1cm}&k\geq2 \\
    1\hspace{1cm}&k=0,1 \\
    3\hspace{1cm}&k=-1 \\
    -2k\hspace{1cm}&k\leq-2
\end{aligned}\:.\right.\]
Plugging such values of $h(T_{2,3;2,4})$ into \cite[Theorem 3]{GH} yields 
\[\begin{aligned}\widehat{\textbf{HFL}}_d(T_{2,3;2,4})[s_1,s_2]&\cong\F_{(0)}[2,2]\oplus\F_{(-1)}[2,1]\oplus\F_{(-1)}[1,2]\oplus\F_{(-2)}[1,1]\oplus\F_{(-2)}[0,0]\oplus\F_{(-3)}[0,0]\:\oplus \\
\oplus\: &\F_{(-6)}[-1,-1]\oplus\F_{(-7)}[-2,-1]\oplus\F_{(-7)}[-1,-2]\oplus\F_{(-8)}[-2,-2]\:.\end{aligned}\]
Now, in order to compute the homology of $T_{2,3;2,4}'$ we need the following formula \cite[Proposition 11.4.2]{Book}, which relates the link Floer homology group of links where the orientation of one component differs:
suppose that $L$ has $n$-components, and let $L'$ be the
oriented link obtained from $L$ by reversing the orientation of its $i$-th component $L_i$. Then
\begin{equation}
 \label{eq:orientation}
 \widehat{\textbf{HFL}}_d(L')[s_1,...,s_n]\cong\widehat{\textbf{HFL}}_{d-2s_i+\lk(L_i,L\setminus L_i)}(L)[s_1,...,-s_i,...,s_n]\:.
\end{equation}
Applying Equation \eqref{eq:orientation} to $\widehat{\textbf{HFL}}(T_{2,3;2,4})$ we then obtain
\[\begin{aligned}\widehat{\textbf{HFL}}_d(T_{2,3;2,4}')[s_1,s_2]&\cong\F_{(-1)}[2,-1]\oplus\F_{(-1)}[-1,2]\oplus\F_{(0)}[0,0]\oplus\F_{(-1)}[0,0]\oplus\F_{(-2)}[2,-2]\:\oplus \\
\oplus\:&\F_{(-2)}[1,-1]\oplus\F_{(-2)}[-1,1]\oplus\F_{(-2)}[-2,2]\oplus\F_{(-3)}[-1,2]\oplus\F_{(-3)}[1,-2]\:;\end{aligned}\]
and it is now easy to check that, collapsing the Alexander grading as in Equation \eqref{eq:collapsed}, the group $\widehat{HFL}(T_{2,3;2,4}')$ is precisely the one appearing in Table \ref{Table}.

\end{document}